\documentclass[a4paper,12pt]{amsart}
\pagestyle{plain}
\usepackage{verbatim}
\usepackage[english]{babel}
\usepackage[utf8]{inputenc}
\usepackage{amsthm}
\usepackage[T1]{fontenc}
\usepackage{amsmath}
\usepackage{enumerate}
\usepackage{mathabx}
\usepackage{mathtools}
\usepackage{amssymb}
\usepackage{tikz}
\theoremstyle{plain}
\newtheorem{Teo}{Theorem}[section]

\newtheorem{Prop}[Teo]{Proposition}
\theoremstyle{definition}

\newtheorem{Def}[Teo]{Definition}
\numberwithin{equation}{section}

\DeclareMathOperator{\Lip}{Lip}

\DeclareMathOperator{\spanop}{span}

\title{On the duality of the\\ symmetric strong diameter $2$ property\\ in Lipschitz spaces}
\author{Andre Ostrak}
\date{}

\begin{document}

\begin{abstract}
We characterise the weak$^*$ symmetric strong diameter $2$ property in Lipschitz function spaces by a property of its predual, the Lipschitz-free space. We call this new property decomposable octahedrality and study its duality with the symmetric strong diameter $2$ property in general. For a Banach space to be decomposably octahedral it is sufficient that its dual space has the weak$^*$ symmetric strong diameter $2$ property. Whether it is also a necessary condition remains open.
\end{abstract}

\maketitle

\section{Introduction}
We consider only real nontrivial Banach spaces. We start by fixing some notation. Let $X$ be a Banach space. Denote its closed unit ball, unit sphere, and dual space by $B_X$, $S_X$, and $X^*$, respectively. A \emph{weak$^*$ slice} of $B_{X^*}$ is a set of the form
\[
S(B_{X^*},x, \alpha)\coloneqq \{x^*\in B_{X^*}\colon x^*(x)>1-\alpha\},
\]
where $x\in S_X$ and $\alpha>0$.

Let $M$ be a pointed metric space, that is, a metric space with a fixed point $0$. The space $\Lip_0(M)$ of all Lipschitz functions $f\colon M\to \mathbb{R}$ with $f(0)=0$ is a Banach space with the norm
\[
\|f\|_{\Lip} =\sup\left\{ \frac{|f(x)-f(y)|}{d(x,y)}\colon x,y\in M,\; x\neq y\right\}.
\]

Let $\delta_x\colon M\to \mathbb{R}$ be the characteristic function of the one element set $\{x\}$.
The Lipschitz-free space $\mathcal{F}(M)$ is defined as the completion of the molecule space
\[
\mathcal{M}(M)=\spanop\left\{\delta_p-\delta_q \colon p,q\in M \right\},
\]
equipped with the Arens--Eells norm
\[
\|\mu\|=\inf\Big\{\sum_{i=1}^n|\lambda_i| d(p_i, q_i)\colon \mu=\sum_{i=1}^n\lambda_i(\delta_{p_i}-\delta_{q_i}),\; p_i, q_i\in M, \;n\in \mathbb{N}\Big\},
\]
where the infimum is taken over all expressions of molecule $\mu\in \mathcal{M}(M)$ as a linear combination of
elementary molecules $\delta_p-\delta_q$ (see details in \cite{W}). For any $p,q\in M$,
\[
\|\delta_p-\delta_q\|=d(p,q).
\]
It can be shown that $\mathcal{F}(M)^*$ is isometrically isomorphic to $\Lip_0(M)$, where the isomorphism can be defined as  $T\colon \Lip_0(M)\to \mathcal{M}(M)^*$,
\[
(Tf)(\mu)=\sum_{p\in M} f(p)\mu(p),
\]
where $f\in \Lip_0(M)$, $\mu\in \mathcal{M}(M)$.

Recall that the dual Banach space $X^*$ has the \emph{weak$^*$ strong diameter $2$ property} ($w^*$-SD$2$P) if every finite convex combination of weak$^*$ slices of $B_{X^*}$ has diameter $2$. The (norm of) Banach space $X$ is said to be octahedral (OH) if, for any $x_1,\ldots, x_n\in S_X$ and $\varepsilon>0$, there exists a $y\in S_X$ such that, for any $i\in \{1,\ldots, n\}$,
\[
\|x_i+y\|\geq 2-\varepsilon.
\]

It is well known that the dual space $X^*$ has the $w^*$-SD$2$P if and only if the norm of Banach space $X$ is octahedral (\cite{D},\cite{G}, for a proof, see, e.g., \cite{BLR} or \cite{HLP}). This means that the Lipschitz space $\Lip_0(M)$ has the $w^*$-SD$2$P if and only if the norm of the Lipschitz-free space $\mathcal{F}(M)$ is octahedral. In \cite{PR}, it was shown that the octahedrality of $\mathcal{F}(M)$ can also be characterised by the following property of the metric space $M$.
\begin{Def}
A metric space $M$ is said to have the \emph{long trapezoid property} (LTP) if, for every finite subset $N$ of $M$ and $\varepsilon>0$, there exist $u,v\in M$, $u\neq v$, such that, for any $x,y\in N$,
\begin{equation*}
(1-\varepsilon)\bigl(d(x,y)+d(u,v)\bigr)\leq d(x,u)+d(y,v).
\end{equation*}
\end{Def}
\noindent
More precisely, it was shown that the following theorem holds. 
\begin{Teo}{\cite[Theorem 3.1]{PR}}\label{Teo: LTP} Let $M$ be a pointed metric space. The following statements are equivalent:
\begin{enumerate}[\upshape (i)]
    \item $\Lip_0(M)$ has the $w^*$-SD$2$P;
    \item the norm of $\mathcal{F}(M)$ is OH;
    \item $M$ has the LTP.
\end{enumerate}
\end{Teo}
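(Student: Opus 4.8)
The plan is to treat (i)$\Leftrightarrow$(ii) as immediate and to concentrate on the metric characterisation (iii). Applying the quoted duality (a dual space has the $w^*$-SD$2$P if and only if its predual is octahedral) to $X=\mathcal{F}(M)$, whose dual is $\Lip_0(M)$, gives (i)$\Leftrightarrow$(ii) at once. For the equivalence with (iii) I would first record two standard tools. The first is that, in order to establish octahedrality, it suffices to produce, for every finite-dimensional $E\subseteq\mathcal{F}(M)$ and every $\varepsilon>0$, a $\nu\in S_{\mathcal{F}(M)}$ with $\|\mu+\nu\|\ge(1-\varepsilon)(\|\mu\|+1)$ for all $\mu\in E$ (taking $\mu$ a unit vector then yields $\|\mu+\nu\|\ge2-2\varepsilon$). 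The second is the transportation formula for sums of two elementary molecules, $\|(\delta_x-\delta_y)-(\delta_u-\delta_v)\|=\min\bigl(d(x,y)+d(u,v),\,d(x,u)+d(y,v)\bigr)$, obtained by optimally matching the two positive to the two negative masses. Together with its mirror image (swapping $x$ and $y$), this shows that the LTP inequality for a finite set $N$ is exactly the statement that the single molecule $\delta_u-\delta_v$ is $(1-\varepsilon)$-additive, in both orientations, with every generator $\delta_x-\delta_y$, $x,y\in N$. Since any finite-dimensional $E$ sits in some $\mathcal{F}(N)$ with $N$ finite, and $\mathcal{F}(N)$ embeds isometrically in $\mathcal{F}(M)$, this reduces the whole theorem to comparing \emph{generatorwise} and \emph{global} additivity of an elementary molecule.

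For (iii)$\Rightarrow$(ii) I would, given a finite $N$ and $\varepsilon'>0$, apply the LTP with a parameter $\varepsilon$ to be fixed small, obtaining $u\ne v$, and set $z=\delta_u-\delta_v$, $\nu=z/\|z\|$. To verify near-additivity with an arbitrary unit $\mu\in\mathcal{F}(N)$, I would take $g\in B_{\Lip_0(N)}$ attaining $\langle g,\mu\rangle=\|\mu\|$ and extend it to $N\cup\{u,v\}$ by a McShane extension $h$ chosen to maximise $h(u)-h(v)$. Writing $D_0=\min_{p\in N}(g(p)+d(u,p))-\max_{q\in N}(g(q)-d(v,q))$, the achievable value of $h(u)-h(v)$ is $\min(d(u,v),D_0)$, and the $1$-Lipschitzness of $g$ together with the LTP inequality gives $D_0\ge\min_{p,q}\bigl(d(u,p)+d(v,q)-d(p,q)\bigr)\ge(1-\varepsilon)d(u,v)-\varepsilon\operatorname{diam}(N)$. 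Hence a single norming functional realises $\langle h,\mu+z\rangle\ge\|\mu\|+\|z\|-\varepsilon\bigl(d(u,v)+\operatorname{diam}(N)\bigr)$; since $\|\mu\|=1$ and $\operatorname{diam}(N)$ is a fixed constant, choosing $\varepsilon\le\varepsilon'/\max(1,\operatorname{diam}(N))$ bounds the deficit by $\varepsilon'(1+d(u,v))$, which after rescaling $z$ to $\nu$ is the desired additivity.

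For (ii)$\Rightarrow$(iii) I would argue in the opposite direction. Given a finite $N$, apply octahedrality to the finite family of unit molecules $\{m_{xy}=(\delta_x-\delta_y)/d(x,y):x,y\in N,\ x\ne y\}$ with a small parameter $\varepsilon'$, obtaining $w\in S_{\mathcal{F}(M)}$ with $\|m_{xy}+w\|\ge2-\varepsilon'$ for every pair. Using that $B_{\mathcal{F}(M)}$ is the closed absolutely convex hull of the elementary molecules, I would approximate $w$ by a finite positive combination $\sum_k c_k m_{u_kv_k}$ with $\sum_k c_k$ close to $1$. For each pair the near-norming functional $f_{xy}$ satisfies $\langle f_{xy},m_{xy}\rangle>1-\varepsilon'$ and $\langle f_{xy},w\rangle>1-\varepsilon'$; the second inequality forces $\sum_k c_k\bigl(1-\langle f_{xy},m_{u_kv_k}\rangle\bigr)$ to be small, so all but a small fraction of the $c$-mass sits on molecules almost aligned with $f_{xy}$. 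Because there are only $|N|^2$ pairs, a mass-concentration (measure pigeonhole) argument then produces one index $k^*$ lying in every ``good'' set, i.e.\ a \emph{single} elementary molecule $m_{u^*v^*}$ with $\langle f_{xy},m_{u^*v^*}\rangle>1-\sqrt{\varepsilon'}$ for all pairs. Finally, combining $\langle f_{xy},m_{xy}\rangle>1-\varepsilon'$ and $\langle f_{xy},m_{u^*v^*}\rangle>1-\sqrt{\varepsilon'}$ through the two Lipschitz bounds $f_{xy}(x)-f_{xy}(v^*)\le d(x,v^*)$ and $f_{xy}(u^*)-f_{xy}(y)\le d(u^*,y)$ yields $(1-\sqrt{\varepsilon'})\bigl(d(x,y)+d(u^*,v^*)\bigr)\le d(x,v^*)+d(y,u^*)$, which is precisely the LTP inequality with $(u,v)=(v^*,u^*)$.

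I expect the genuine obstacle to be the implication (ii)$\Rightarrow$(iii): octahedrality only hands us an arbitrary norm-one witness $w$, a priori an infinite mixture of molecules, whereas the LTP demands a single pair $(u,v)$. The decisive point is the measure-pigeonhole step, which succeeds only because $N$—hence the family of pairs—is finite, and which requires careful tracking of orientations so that the extracted molecule is aligned with all $m_{xy}$ simultaneously. By contrast, in (iii)$\Rightarrow$(ii) the only delicate issue is bounding the McShane-extension deficit uniformly over the unit sphere of $\mathcal{F}(N)$; this stays under control precisely because $\operatorname{diam}(N)$ is fixed while the LTP parameter is ours to shrink.
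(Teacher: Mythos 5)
First, note that the paper does not prove this theorem at all: it is quoted verbatim from \cite[Theorem 3.1]{PR}, so there is no internal proof to compare against and your argument has to stand on its own. The equivalence (i)$\Leftrightarrow$(ii) is indeed immediate from the quoted Deville--Godefroy duality applied to $X=\mathcal{F}(M)$. Your (ii)$\Rightarrow$(iii) is sound: writing the octahedrality witness as an almost-optimal positive combination $\sum_k c_k m_{u_kv_k}$ and running Markov's inequality against the finitely many near-norming functionals $f_{xy}$ does extract a single elementary molecule aligned with all of them, and the final two Lipschitz estimates convert alignment into the LTP inequality. (This is close in spirit to the paper's own (ii)$\Rightarrow$(iii) argument for the SLTP/DOH analogue, which likewise reduces to finding one good pair $(u_i,v_i)$ inside a representation of the witness, though there the extraction is done by summing the negated inequalities with weights $\lambda_i$ rather than by a pigeonhole on functionals.) Minor points: a finite-dimensional subspace of $\mathcal{F}(M)$ need not lie in any $\mathcal{F}(N)$ with $N$ finite --- you only need, and should only claim, that finitely many unit vectors can be $\varepsilon$-approximated by finitely supported molecules.

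The genuine gap is in (iii)$\Rightarrow$(ii), exactly at the step you describe as ``after rescaling $z$ to $\nu$.'' Your McShane bound gives $h(u)-h(v)\ge (1-\varepsilon)d(u,v)-\varepsilon\operatorname{diam}(N)$, an error that is \emph{additive in units of distance}; after normalising $\nu=(\delta_u-\delta_v)/d(u,v)$ the deficit becomes $\varepsilon\operatorname{diam}(N)/d(u,v)$, and the LTP gives no lower bound on $d(u,v)$ --- the witness is produced only \emph{after} you have committed to $\varepsilon$, so ``the LTP parameter is ours to shrink'' cannot control this quotient. Concretely, take $N=\{x_0,y_0\}$ with $d(x_0,y_0)=1$ and points $u,v$ with $d(u,v)=\varepsilon$ and $d(x_0,u)=d(x_0,v)=d(y_0,u)=d(y_0,v)=\tfrac12$. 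This is a metric, and $(u,v)$ satisfies every LTP inequality for $N$ at parameter $\varepsilon$ (indeed $(1-\varepsilon)(1+\varepsilon)=1-\varepsilon^2\le 1$). Yet any $g$ with $g(x_0)-g(y_0)=1$ and $\|g\|_{\Lip}\le1$ forces $h(u)=h(v)=g(x_0)-\tfrac12$ for every $1$-Lipschitz extension $h$, so your functional yields only $\|m_{x_0y_0}+\nu\|\ge 1$. The conclusion is still true ($\|m_{x_0y_0}+\nu\|=2-\varepsilon$ here), but it is witnessed by a functional that gives up an $\varepsilon$ of the norming of $m_{x_0y_0}$ in order to separate $u$ from $v$ --- a trade-off your construction, which insists on an \emph{exact} norming functional for $\mu$ extended by McShane, cannot make. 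To repair the argument you must either perturb the norming functional before extending, or bound $\|\mu+\nu\|$ from below by working with the infimum defining the Arens--Eells norm (as the paper does in its SLTP computation) rather than with a single dual element.
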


The objective of this paper is to give a similar characterisation to the following property, which was introduced only recently but has already been under rigorous study (see \cite{ALN}, \cite{ANP}, \cite{CCGMR}, \cite{HLLN}, \cite{L}, and \cite{LR}).
\begin{Def}
A dual Banach space $X^*$ is said to have the \emph{weak$^*$ symmetric strong diameter $2$ property} ($w^*$-SSD$2$P) if, for every finite family $\{S_i\}_{i=1}^n$ of weak$^*$ slices of $B_{X^*}$ and $\varepsilon>0$, there exist $f_i\in S_i$, $i=1,\ldots,n$, and $g\in B_{X^*}$ such that $f_i\pm g\in S_i$ for every $i\in \{1,\ldots,n\}$ and $\|g\|>1-\varepsilon$.
\end{Def}

It is known that generally the $w^*$-SSD$2$P is a strictly stronger property than the $w^*$-SD$2$P. In fact, a Lipschitz function space with the $w^*$-SD$2$P but without the $w^*$-SSD$2$P appeared in \cite{O}. Moreover, it was shown in \cite{O} that the Lipschitz space $\Lip_0(M)$ has the $w^*$-SSD$2$P if and only if the metric space $M$ has the following property.
\begin{Def}
A metric space $M$ is said to have the \emph{strong long trapezoid property} (SLTP) if, for every finite subset $N$ of $M$ and $\varepsilon>0$, there exist $u,v\in M$,  $u\neq v$, such that, for any $x,y \in N$,
\begin{equation}\label{eq: LTP}
(1-\varepsilon)\bigl(d(x,y)+d(u,v)\bigr)\leq d(x,u)+d(y,v),
\end{equation}
and, for any $x,y,z,w\in N$,
\begin{equation}\label{eq: SLTP}
\begin{aligned}
(1-\varepsilon)&\bigl(d(x,y)+d(z,w)+2d(u,v)\bigr)\\
&\qquad\qquad\leq d(x,u)+d(y,u)+d(z,v)+d(w,v).
\end{aligned}
\end{equation}
\end{Def}

We now introduce the property that, via Lipschitz-free space $\mathcal{F}(M)$, characterises the $w^*$-SSD$2$P of the Lipschitz space $\Lip_0(M)$.

\begin{Def}
We say that the (norm of) Banach space $X$ is \emph{decomposably octahedral} (DOH) if, for every finite subset $E$ of $S_X$ and $\varepsilon>0$, there exists a $y\in S_X$ such that, for any $y_1,\ldots, y_n \in X$ with $\sum_{i=1}^n y_i=y$, and, for any $x_1,\ldots, x_n\in E$, $a_1, b_1,\ldots,a_n, b_n \geq 0$, the following inequality holds
\begin{equation*}
\begin{aligned}
\sum_{i=1}^n \bigr(\|a_i x_i +y_i\|+\|b_i x_i -y_i\|\bigr)\geq (1-\varepsilon)\Big(\sum_{i=1}^n (a_i+b_i)+2\Big).
\end{aligned}
\end{equation*}
\end{Def}
It is easy to verify that OH follows from DOH. The main objective of this paper is to show that the Lipschitz space $\Lip_0(M)$ has the $w^*$-SSD$2$P if and only if $\mathcal{F}(M)$ is DOH. More generally, we show that if the dual space $X^*$ has the $w^*$-SSD$2$P then $X$ is DOH. Whether the converse is true, is currently unknown to us. We finish the paper by looking through examples of octahedral Banach spaces whose duals are known not to have the $w^*$-SSD2P. These Banach spaces also fail to be DOH.

The paper is organised as follows.

In Section \ref{sec: 2}, we show that if the dual Banach space $X^*$ has the $w^*$-SSD$2$P then $X$ is DOH. In addition, we prove Theorem \ref{main}, which says that $\Lip_0(M)$ has the $w^*$-SSD$2$P if and only if $\mathcal{F}(M)$ is DOH.

In Section \ref{sec: 3}, we prove Proposition \ref{prop: DOH direct sums}, which gives necessary and sufficient conditions for the absolute sum of two Banach spaces to be DOH. We finish the paper by showing that the space $C[0,1]$, the norm of which is known to be octahedral, is not DOH.

\section{Main results}\label{sec: 2}
In this section, we show that if a dual Banach space $X^*$ has the $w^*$-SSD$2$P then $X$ is DOH. Whether the reverse implication holds in general, is unknown to us. However, in the following, we prove that the reverse implication holds if $X$ is a Lipschitz-free space.
\begin{Prop}\label{Prop}
Let $X$ be a Banach space. If $X^*$ has the $w^*$-SSD$2$P then $X$ is DOH. 
\end{Prop}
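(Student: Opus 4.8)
The plan is to exploit the symmetric structure of the $w^*$-SSD$2$P directly: the single perturbation $g$ furnished by the property will, after a normalising choice, serve as the witness $y$ for DOH, while the symmetric pair $f\pm g$ produces the decisive factor $2$ on the right-hand side. Concretely, I would fix a finite set $E=\{e_1,\dots,e_m\}\subseteq S_X$ and $\varepsilon>0$, and set $\alpha=\varepsilon$, $\delta=\varepsilon/2$. Applying the $w^*$-SSD$2$P to the finite family of weak$^*$ slices $S_j\coloneqq S(B_{X^*},e_j,\alpha)$, $j=1,\dots,m$, yields functionals $f_j\in S_j$ and a \emph{common} $g\in B_{X^*}$ with $f_j\pm g\in S_j$ for every $j$ and $\|g\|>1-\delta$. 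Since $\|g\|=\sup_{z\in S_X}g(z)$ and $g\neq 0$, I may then choose $y\in S_X$ with $g(y)>\|g\|-\delta>1-2\delta=1-\varepsilon$; this $y$ is the candidate witness for DOH.

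Next, given arbitrary DOH data --- an index $n$, elements $x_1,\dots,x_n\in E$, scalars $a_i,b_i\geq 0$, and a decomposition $\sum_{i=1}^n y_i=y$ --- I would write each $x_i=e_{j(i)}$ and estimate the two norms in the $i$-th term from below by testing against $f_{j(i)}\pm g$, both of which lie in $B_{X^*}$:
\[
\|a_i x_i+y_i\|+\|b_i x_i-y_i\|\ \geq\ (f_{j(i)}+g)(a_i x_i+y_i)+(f_{j(i)}-g)(b_i x_i-y_i).
\]
Expanding, the coefficient terms give $a_i(f_{j(i)}+g)(x_i)+b_i(f_{j(i)}-g)(x_i)\geq(1-\alpha)(a_i+b_i)$, because $f_{j(i)}\pm g\in S_{j(i)}$ forces both $(f_{j(i)}\pm g)(x_i)>1-\alpha$; meanwhile the $y_i$-terms collapse to $(f_{j(i)}+g)(y_i)-(f_{j(i)}-g)(y_i)=2g(y_i)$, the $f_{j(i)}$ contributions cancelling. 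This cancellation, which is exactly what the symmetric $\pm g$ provides, is the crux of the argument.

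Finally I would sum over $i$. The decisive point is that the single functional $g$ is shared by all terms, so $\sum_{i=1}^n g(y_i)=g\bigl(\sum_{i=1}^n y_i\bigr)=g(y)$, independently of how $y$ was split. Hence
\[
\sum_{i=1}^n\bigl(\|a_i x_i+y_i\|+\|b_i x_i-y_i\|\bigr)\ \geq\ (1-\alpha)\sum_{i=1}^n(a_i+b_i)+2g(y)\ \geq\ (1-\varepsilon)\Big(\sum_{i=1}^n(a_i+b_i)+2\Big),
\]
using $\alpha=\varepsilon$ and $g(y)>1-\varepsilon$, which is precisely the DOH inequality.

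The main obstacle I anticipate is reconciling the quantifiers: the $w^*$-SSD$2$P is invoked once, for the fixed family indexed by $E$, whereas DOH quantifies over an arbitrary number $n$ of terms with the $x_i$ drawn, with repetition, from $E$. The resolution is that $g$ is a single perturbation valid simultaneously for every slice $S_j$, so repetitions and the unbounded $n$ cause no difficulty: each summand is matched to the slice determined by its own $x_i$, and the cross terms telescope through $g(y)$ regardless of $n$. A minor point to verify is that $y$ may be taken in $S_X$ rather than merely in $B_X$, which is immediate from $g\neq 0$ and the definition of the dual norm.
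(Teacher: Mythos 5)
Your proof is correct and follows essentially the same route as the paper: apply the $w^*$-SSD$2$P to the slices determined by $E$, choose $y$ nearly norming the common perturbation $g$, and bound each summand from below by testing against $f\pm g$, with the decomposition collapsing via $\sum_i g(y_i)=g(y)$. The only (harmless) difference is that you bound the coefficient terms directly from $(f_{j(i)}\pm g)(x_i)>1-\alpha$, whereas the paper separately estimates $f_{x}(x)\geq 1-\varepsilon/2$ and $|g(x)|\leq \varepsilon/2$; both yield the same conclusion.
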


\begin{proof}
Assume that $X^*$ has the $w^*$-SSD$2$P. Let $E$ be a finite subset of $S_X$ and $\varepsilon>0$.
For any $x\in E$, define a $w^*$-slice $S_{x} = S\left(B_{X^*}, x, \frac{\varepsilon}{2}\right)$.
Since $\Lip_0(M)$ has the $w^*$-SSD$2$P, we can find $f_{x}\in S_{x}$ and $g\in B_{X^*}$ such that $\|f_{x}\pm g\|\leq 1$ for every $x\in E$ and $\|g\|\geq 1-\varepsilon$. Then, for any $x\in E$,
\[
f_{x}(x)\geq 1-\frac{\varepsilon}{2}\quad \text{and}\quad |g(x)|\leq \frac{\varepsilon}{2}.
\]
Let $y\in B_X$ be such that $g(y)\geq 1-\varepsilon$. For any $y_1,\ldots, y_n \in X$ with $\sum_{i=1}^n y_i=y$, and, for any $x_1,\ldots,x_n\in E$, $a_1, b_1,\ldots,a_n, b_n\geq 0$, we have
\begin{align*}
&\sum_{i=1}^n \bigr(\|a_i x_i +y_i\|+\|b_i x_i -y_i\|\bigr)\\
&\qquad\qquad \geq \sum_{i=1}^n \bigr((f_{x_i}+g)(a_i x_i+y_i)+ (f_{x_i}-g)(b_i x_i-y_i)\bigr)\\
&\qquad\qquad=\sum_{i=1}^n \bigr((a_i+b_i) f_{x_i}(x_i)+(a_i-b_i) g(x_i)+2g(y_i)\bigr)\\
&\qquad\qquad\geq (1-\varepsilon)\sum_{i=1}^n (a_i+b_i)+2\sum_{i=1}^n g(y_i)\\
&\qquad\qquad=(1-\varepsilon)\sum_{i=1}^n (a_i+b_i)+2g(y)\\
&\qquad\qquad\geq(1-\varepsilon)\Big(\sum_{i=1}^n (a_i+b_i)+2\Big).
\end{align*}
Therefore, $X$ is DOH.
\end{proof}

\begin{Teo}[cf. {\cite[Theorem 2.1]{O}}]\label{main}
Let $M$ be a pointed metric space. The following statements are equivalent:
\begin{enumerate}[\upshape (i)]
    \item $\Lip_0(M)$ has the $w^*$-SSD$2$P;
    \item $\mathcal{F}(M)$ is DOH;
    \item $M$ has the SLTP.
\end{enumerate}
\end{Teo}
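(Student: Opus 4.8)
The plan is to close the cycle of implications by combining two facts already at hand with one genuinely new argument. The equivalence (i) $\Leftrightarrow$ (iii) is exactly \cite[Theorem~2.1]{O}, and (i) $\Rightarrow$ (ii) is immediate from Proposition~\ref{Prop}: since $\mathcal{F}(M)^{*}=\Lip_0(M)$, the $w^{*}$-SSD$2$P of $\Lip_0(M)$ forces $\mathcal{F}(M)$ to be DOH. Hence the only substantive new implication is (ii) $\Rightarrow$ (iii), i.e.\ DOH of $\mathcal{F}(M)$ implies the SLTP of $M$. This is precisely the Lipschitz-space instance of the converse that is open in general, so the argument must use the metric structure and cannot be purely formal.

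For (ii) $\Rightarrow$ (iii) I would fix a finite $N\subseteq M$ (with $0\in N$) and $\varepsilon>0$, write $m_{pq}=\frac{\delta_p-\delta_q}{d(p,q)}\in S_{\mathcal{F}(M)}$ for $p\neq q$, and apply DOH to $E=\{m_{pq}\colon p,q\in N,\ p\neq q\}$ with a small auxiliary $\varepsilon'$, obtaining a witness $\gamma\in S_{\mathcal{F}(M)}$. The SLTP pair $(u,v)$ is then extracted from $\gamma$: take a near-optimal expansion of $\gamma$ as a convex combination of normalized elementary molecules and, by an averaging argument over the finitely many tuples from $N$ (with $\varepsilon'$ small relative to their number), select a single oriented edge $(u,v)$ for which the DOH estimate persists after replacing $\gamma$ by $m_{uv}$. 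Inequality \eqref{eq: LTP} then drops out by specialising DOH to $n=1$, $b_1=0$, $x_1=m_{xy}$, $a_1=\frac{d(x,y)}{d(u,v)}$: the lower bound reads $\|a_1 m_{xy}+m_{uv}\|\ge(1-\varepsilon')(a_1+2)-1$, while the regrouping $(\delta_x-\delta_y)+(\delta_u-\delta_v)=(\delta_x-\delta_v)+(\delta_u-\delta_y)$ gives $\|a_1 m_{xy}+m_{uv}\|\le\frac{d(x,v)+d(y,u)}{d(u,v)}$; clearing denominators yields $d(x,v)+d(y,u)\ge(1-2\varepsilon')\bigl(d(x,y)+d(u,v)\bigr)$, which is \eqref{eq: LTP} for the pair $(y,x)$, hence for all pairs.

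The hard part will be the symmetric inequality \eqref{eq: SLTP}. The naive attempt is to specialise DOH to $n=2$ with $x_1=m_{xy}$, $x_2=m_{zw}$, $a_i=b_i$, and a decomposition $y_1+y_2=\gamma\approx m_{uv}$ making the four norms $\|a_ix_i\pm y_i\|$ collapse to the fork distances $d(x,u),d(y,u),d(z,v),d(w,v)$. The obstruction is structural: the decomposition realising these exact distances forces $y_1+y_2$ to carry total coefficient $-2$ over $\{u,v\}$, whereas $m_{uv}$ carries coefficient $0$ there; so \emph{no} feasible decomposition (summing to the fixed $\gamma$) reproduces the fork. Worse, one finds that the least admissible value of $\sum_{i}\bigl(\|a_ix_i+y_i\|+\|b_ix_i-y_i\|\bigr)$ is, after rescaling, exactly $d(x,y)+d(z,w)+2d(u,v)$, i.e.\ the left-hand side of \eqref{eq: SLTP}, so DOH by itself returns only the tautology $\mathrm{LHS}\le\mathrm{LHS}$ rather than $\mathrm{LHS}\le\mathrm{RHS}$.

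Overcoming this is the crux, and it is what separates \eqref{eq: SLTP} from the octahedral/LTP case. The remedy I would pursue is to supply the missing upper bound on the Lipschitz-dual side: from the DOH data, for the two slices determined by $m_{xy}$ and $m_{zw}$, one aims to produce $1$-Lipschitz functions $f_1,f_2$ together with the norming functional $g\in S_{\Lip_0(M)}$ of $\gamma$ (so $g$ nearly attains on $(u,v)$) with $f_i\pm g\in B_{\Lip_0(M)}$, and then to lower-bound each fork distance by the matching sign combination. The assignment $d(x,u)\ge(f_1-g)(x)-(f_1-g)(u)$, $d(y,u)\ge(f_1+g)(u)-(f_1+g)(y)$, and symmetrically $d(z,v)\ge(f_2+g)(z)-(f_2+g)(v)$, $d(w,v)\ge(f_2-g)(v)-(f_2-g)(w)$, telescopes the $f_i$ into $d(x,y)+d(z,w)$ and leaves the surplus $2\bigl(g(u)-g(v)\bigr)\approx 2d(u,v)$ with the correct sign. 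What it also leaves behind is the residual $g(z)+g(w)-g(x)-g(y)$, which is \emph{not} sign-controlled in general; forcing this residual to be nonnegative up to $\varepsilon$ — uniformly over all quadruples — is exactly the main obstacle. This residual is the precise ``strong'' content of the SSD$2$P, and taming it through the coordinated choice of the witness $\gamma$, the edge $(u,v)$, and the functionals $g,f_1,f_2$ is where the proof must be most delicate.
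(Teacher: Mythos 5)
Your reduction is correct as far as it goes: (i)$\Leftrightarrow$(iii) is \cite[Theorem 2.1]{O}, (i)$\Rightarrow$(ii) is Proposition \ref{Prop}, and the whole content is (ii)$\Rightarrow$(iii). Your $n=1$ derivation of \eqref{eq: LTP} is also essentially sound. But the proposal has a genuine gap exactly where you say it does: you do not prove \eqref{eq: SLTP}. Your proposed remedy --- manufacturing $f_1,f_2,g\in B_{\Lip_0(M)}$ with $f_i\pm g\in B_{\Lip_0(M)}$ and $g$ nearly norming $(u,v)$ --- amounts to assuming the $w^*$-SSD$2$P on the dual side, which is precisely what is not available under the hypothesis DOH; and you concede that the residual $g(z)+g(w)-g(x)-g(y)$ is not controlled. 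A second, smaller gap is the edge-selection step: DOH is a statement about one fixed witness $y$ and \emph{all} decompositions of it, and there is no reason the estimate ``persists after replacing $\gamma$ by $m_{uv}$'' for a single elementary molecule extracted from a near-optimal expansion.

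The missing idea is that the quantifier structure of DOH already contains the fork inequality, provided you use the freedom in choosing the decomposition $\sum_i\nu_i=\nu$ aggressively. The paper takes a near-optimal expansion $\nu=\sum_i\lambda_i(\delta_{u_i}-\delta_{v_i})$ with $\sum_i\lambda_i d(u_i,v_i)=1$ and argues by contradiction: if \emph{every} edge $(u_i,v_i)$ violates \eqref{eq: LTP} or \eqref{eq: SLTP} for some tuple from $N$, then for each offending index with witnesses $x_i,y_i,z_i,w_i$ one splits the corresponding piece of $\nu$ into \emph{three} legs, $\lambda_i(\delta_{u_i}-\delta_{y_i})$, a middle leg through $z_i$, and $\lambda_i(\delta_{z_i}-\delta_{v_i})$ (routed through points of $N$ itself), and pairs the outer legs with $\frac{\delta_{x_i}-\delta_{y_i}}{d(x_i,y_i)}$ and $\frac{\delta_{z_i}-\delta_{w_i}}{d(z_i,w_i)}$ and the middle leg with $\frac{\delta_{y_i}-\delta_{z_i}}{d(y_i,z_i)}\in E$, with coefficients $a=d(x_i,y_i)$, $b=0$, etc. Each pairing then produces, via the triangle inequality, exactly one fork distance plus one leg length (e.g.\ $\|\delta_{x_i}-\delta_{y_i}-(\delta_{u_i}-\delta_{y_i})\|+\|\delta_{u_i}-\delta_{y_i}\|=d(x_i,u_i)+d(y_i,u_i)$), so the DOH lower bound, applied once to this single global decomposition and summed with weights $\lambda_i$ over all edges simultaneously, contradicts the assumed failure. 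This three-leg splitting is precisely what circumvents the ``coefficient $-2$ over $\{u,v\}$'' obstruction you identified, and it is why the decomposition clause in the definition of DOH (absent from ordinary octahedrality) is the right predual counterpart of the symmetric condition $f_i\pm g\in S_i$. Without this device your argument does not close.
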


\begin{proof}
(i)$\Leftrightarrow$(iii) is \cite[Theorem 2.1]{O}.

(i)$\Rightarrow$(ii) holds by Proposition \ref{Prop}.

(ii)$\Rightarrow$(iii). Assume that the Lipschitz-free space $\mathcal{F}(M)$ is DOH. Let $N$ be a finite subset of $M$ and $0<\varepsilon<\frac{1}{2}$. Define $E = \big\{\frac{\delta_p-\delta_q}{d(p,q)}\colon p,q\in N\big\}$ and let $0<\delta<\frac{r\varepsilon}{2R}$, where $r, R>0$ are such that
\[
r<d(p,q)<R\qquad \text{for any $p,q\in N$, $p\neq q$.}
\]
Since $\mathcal{F}(M)$ is DOH, there exists a $\nu\in \spanop\{\delta_p-\delta_q\colon p,q\in M\}$, $\|\nu\|<1$, such that, for any $\nu_1,\ldots, \nu_n \in \mathcal{F}(M)$ with $\sum_{i=1}^n \nu_i=\nu$, and, for any $\mu_1,\ldots, \mu_n\in E$, $a_1, b_1,\ldots, a_n, b_n\geq 0$, the following inequality holds
\begin{equation*}
\begin{aligned}
\sum_{i=1}^n \bigr(\|a_i \mu_i +\nu_i\|+\|b_i \mu_i -\nu_i\|\bigr)\geq (1-\delta)\Big(\sum_{i=1}^n (a_i+b_i)+2\Big).
\end{aligned}
\end{equation*}

Since
\[
\|\nu\|=\inf\Big\{\sum_{i=1}^n|\lambda_i| d(p_i, q_i)\colon \nu=\sum_{i=1}^n\lambda_i(\delta_{p_i}-\delta_{q_i}),\; p_i, q_i\in M\Big\},
\]
there exist $n\in \mathbb{N}$ and $\lambda_i>0$, $u_i, v_i\in M$, $u_i\neq v_i$, $i=1,\ldots, n$, such that $\nu=\sum_{i=1}^n \lambda_i (\delta_{u_i}-\delta_{v_i})$ and $\sum_{i=1}^n \lambda_i d(u_i, v_i)= 1$.

It suffices to show that there exists an $i\in \{1,\ldots, n\}$ such that, taking $u = u_i$ and $v = v_i$, the inequalities \eqref{eq: LTP} and \eqref{eq: SLTP} hold for any $x,y,z,w\in N$. Suppose that, contrary to our claim, for any $i\in \{1,\ldots, n\}$, there exist $x_i, y_i\in N$ such that
\begin{equation}\label{neq: LTP}
(1-\varepsilon)\bigl(d(x_i,y_i)+d(u_i,v_i)\bigr)>d(x_i,u_i)+d(y_i,v_i),
\end{equation}
or $x_i, y_i, z_i, w_i\in N$ such that
\begin{equation}\label{neq: SLTP}
\begin{aligned}
&(1-\varepsilon)\bigl(d(x_i,y_i)+d(z_i,w_i)+2d(u_i,v_i)\bigr)\\
&\qquad\qquad\qquad> d(x_i,u_i)+d(y_i,u_i)+d(z_i,v_i)+d(w_i,v_i).
\end{aligned}
\end{equation}
Let $I$ be the subset of indexes $\{1,\ldots, n\}$ for which there exist $x_i, y_i\in N$ such that \eqref{neq: LTP} holds, and let $J$ be the set $\{1,\ldots, n\}\setminus I$. By our assumption, for every $i\in J$, there exist $x_i, y_i, z_i, w_i\in N$ such that $x_i\neq y_i$ or $z_i\neq w_i$, and \eqref{neq: SLTP} holds.
Fix such $x_i, y_i\in N$ for every $i\in I$, and $x_i, y_i, z_i, w_i$ for every $i\in J$. Then
\begin{align*}
&\sum_{i\in I} \lambda_i\Bigr(d(x_i,u_i)+d(y_i,v_i)-(1-\varepsilon)\bigl(d(x_i,y_i)+d(u_i,v_i)\bigr)\Bigr)\\
&\qquad +\sum_{i\in J}\lambda_i\bigr( d(x_i,u_i)+d(y_i,u_i)+d(z_i,v_i)+d(w_i,v_i)\bigr)\\
&\qquad -(1-\varepsilon)\sum_{i\in J}\lambda_i\bigr(d(x_i,y_i)+d(z_i,w_i)+2d(u_i,v_i)\bigr)<0.
\end{align*}

To prove that this can not be the case, we show that the following inequality holds
\begin{equation*}\label{ineq: eps-delta}
\begin{aligned}
&\sum_{i\in I} \lambda_i\Big(d(x_i,u_i)+d(y_i,v_i)-(1-\varepsilon)\bigr(d(x_i,y_i)+d(u_i, v_i)\bigr)\Big)\\
&\qquad\qquad +\sum_{i\in J} \lambda_i\bigr(d(x_i,u_i)+d(y_i,u_i)+d(z_i,v_i)+d(w_i,v_i)\bigr)\\
&\qquad\qquad -(1-\varepsilon)\sum_{i\in J} \lambda_i\bigr(d(x_i,y_i)+d(z_i, w_i)+2d(u_i, v_i)\bigr)\\
&\qquad \geq \sum_{i\in I} \lambda_i\bigr(d(x_i,u_i)+d(y_i,v_i)+d(u_i,v_i)\bigr)\\
&\qquad\qquad-(1-\delta)\sum_{i\in I} \lambda_i\bigr(d(x_i,y_i)+2d(u_i, v_i)\bigr)\\
&\qquad \qquad +\sum_{i\in J} \lambda_i\bigr(d(x_i,u_i)+d(y_i,u_i)+d(z_i,v_i)+d(w_i,v_i)+\delta d(y_i, z_i)\bigr)\\
&\qquad\qquad -(1-\delta)\sum_{i\in J} \lambda_i\bigr(d(x_i,y_i)+d(z_i, w_i)+2d(u_i, v_i)\bigr)
\end{aligned}
\end{equation*}
and that the right hand side of this inequality is nonnegative. 

The inequality holds because, since $2\delta\leq \varepsilon$, for any $i\in I$, we have
\begin{align*}
&d(x_i,u_i)+d(y_i,v_i)-(1-\varepsilon)\bigr(d(x_i,y_i)+d(u_i, v_i)\bigr)\\
&\qquad\geq d(x_i,u_i)+d(y_i,v_i)-(1-2\delta)\bigr(d(x_i,y_i)+d(u_i, v_i)\bigr)\\
&\qquad\geq d(x_i,u_i)+d(y_i,v_i)+d(u_i,v_i)-(1-\delta)\bigr(d(x_i,y_i)+2d(u_i, v_i)\bigr),\\
\end{align*}
and, for any $i\in J$, since $\delta d(y_i, z_i)\leq\frac{\varepsilon}{2}\bigr(d(x_i, y_i)+d(z_i, w_i)\bigr)$, we have
\begin{align*}
&d(x_i,u_i)+d(y_i,u_i)+d(z_i,v_i)+d(w_i,v_i)\\
&\qquad\qquad -(1-\varepsilon) \bigr(d(x_i,y_i)+d(z_i, w_i)+2d(u_i, v_i)\bigr)\\
&\qquad\geq  d(x_i,u_i)+d(y_i,u_i)+d(z_i,v_i)+d(w_i,v_i)\\
&\qquad\qquad -(1-\delta-\frac{\varepsilon}{2})\bigr(d(x_i,y_i)+d(z_i, w_i)+2d(u_i, v_i)\bigr)\\
&\qquad\geq d(x_i,u_i)+d(y_i,u_i)+d(z_i,v_i)+d(w_i,v_i)+\delta d(y_i, z_i)\\
&\qquad\qquad -(1-\delta) \bigr(d(x_i,y_i)+d(z_i, w_i)+2d(u_i, v_i)\bigr).
\end{align*}

It remains to prove that 
\begin{align*}
&\sum_{i\in I} \lambda_i\Big(d(x_i,u_i)+d(y_i,v_i)+d(u_i,v_i)-(1-\delta)\bigr(d(x_i,y_i)+2d(u_i, v_i)\bigr)\Big)\\
&\qquad \qquad +\sum_{i\in J} \lambda_i\bigr(d(x_i,u_i)+d(y_i,u_i)+d(z_i,v_i)+d(w_i,v_i)+\delta d(y_i, z_i)\bigr)\\
&\qquad\qquad -(1-\delta)\sum_{i\in J} \lambda_i\bigr(d(x_i,y_i)+d(z_i, w_i)+2d(u_i, v_i)\bigr)\geq 0.
\end{align*}
To this end, note that
\begin{align*}
&\sum_{i\in I} \lambda_i\bigr(d(x_i,u_i)+d(y_i,v_i)+d(u_i,v_i)\bigr)\\
&\qquad\qquad +\sum_{i\in J} \lambda_i\bigr(d(x_i, u_i)+d(y_i, u_i)+d(y_i, z_i)+d(z_i, v_i)+d(w_i, v_i)\bigr)\\
&\qquad \geq\sum_{i\in I} \lambda_i\bigr(\|\delta_{x_i}-\delta_{y_i}-(\delta_{u_i}-\delta_{v_i})\|+\|\delta_{u_i}-\delta_{v_i}\|\bigr)\\
&\qquad\qquad +\sum_{i\in J} \lambda_i\bigr(\|\delta_{x_i}-\delta_{y_i}-(\delta_{u_i}-\delta_{y_i})\|+\|\delta_{u_i}-\delta_{y_i}\|\bigr)\\
&\qquad\qquad +\sum_{i\in J}\lambda_i\bigr(\|\delta_{y_i}-\delta_{z_i}-(\delta_{y_i}-\delta_{z_i})\|+\|\delta_{y_i}-\delta_{z_i}\|\bigr)\\
&\qquad\qquad +\sum_{i\in J}\lambda_i\bigr(\|\delta_{z_i}-\delta_{w_i}-(\delta_{z_i}-\delta_{v_i})\|+\|\delta_{z_i}-\delta_{v_i}\| \bigr)\\
&\qquad\geq 2(1-\delta)+(1-\delta)\sum_{i\in I} \lambda_id(x_i,y_i)\\
&\qquad \qquad+(1-\delta)\sum_{i\in J}\lambda_i \bigr(d(x_i, y_i)+d(y_i,z_i)+d(z_i,w_i)\bigr)\\
&\qquad= (1-\delta)\sum_{i\in I} \lambda_i\bigr(d(x_i,y_i)+2d(u_i, v_i)\bigr)\\
&\qquad \qquad+(1-\delta)\sum_{i\in J} \lambda_i\bigr(d(x_i, y_i)+d(y_i,z_i)+d(z_i,w_i)+2d(u_i, v_i)\bigr),
\end{align*}
where the second inequality holds by our choice of $\nu$ because 
\[
\nu = \sum_{i\in I} (\delta_{u_i}-\delta_{v_i})+\sum_{i\in J} (\delta_{u_i}-\delta_{y_i}+\delta_{y_i}-\delta_{w_i}+\delta_{w_i}-\delta_{v_i}).
\]
This completes the proof.
\end{proof}

\section{Decomposable octahedrality in Banach spaces}\label{sec: 3}
In this section, we look at examples of octahedral Banach spaces for which it is known that the dual space does not have the $w^*$-SSD$2$P. These Banach spaces also fail to be decomposably octahedral. This leaves open the question of whether the reverse implication of Proposition \ref{Prop} holds.

We start by looking at decomposable octahedrality in absolute sums of Banach spaces. Recall that a norm $N$ on $\mathbb{R}^2$ is \emph{absolute} if
\[
N(a,b)=N(|a|,|b|)\qquad \text{for all $(a,b)\in \mathbb{R}^2$,}
\]
and \emph{normalised} if
\[
N(1,0)=N(0,1)=1.
\]

For $1\leq p\leq \infty$, we denote the $\ell_p$ norm on $\mathbb{R}^2$ by $\|\cdot\|_p$. Every $\ell_p$ norm is an absolute normalised norm.

For Banach spaces $X,Y$, we denote by $X\oplus_N Y$ the product space $X\times Y$ equipped with the norm $N$, where
\[
N(x,y)=N(\|x\|,\|y\|)\qquad \text{for all $x\in X, y\in Y$.}
\]
In case $N$ is an $\ell_p$ norm we write $X\oplus_p Y$. 

It can be shown that $X^*\oplus_N Y^*$ has the $w^*$-SSD$2$P if and only if $N$ is the $\ell_\infty$ norm and $X^*$ or $Y^*$ has the $w^*$-SSD$2$P (the proof is similar to the one of \cite[Theorem 3.1]{HLLN}). 

We give necessary and sufficient conditions for the absolute sum of Banach spaces to be DOH.

\begin{Prop}\label{prop: DOH direct sums}
Let $X,Y$ be Banach spaces.
\begin{enumerate}[(a)]
\item The space $X\oplus_1 Y$ is DOH if and only if $X$ or $Y$ is DOH.
\item If $N$ is an absolute normalised norm different from the $\ell_1$ norm then the space $X\oplus_N Y$ is not DOH.
\end{enumerate}
\end{Prop}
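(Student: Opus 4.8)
The plan is to prove the two parts of the proposition by working directly with the DOH definition, exploiting the structure of the absolute norm $N$. Throughout, for an element of $X \oplus_N Y$ I write $(x,y)$, and I recall that the dual norm is governed by a dual absolute norm $N^*$, so that functionals split as $(x^*, y^*)$ with $N^*(\|x^*\|, \|y^*\|) \leq 1$ on the unit ball. I would set up the usual facts about absolute normalised norms first: monotonicity in each coordinate, the fact that $\|\cdot\|_\infty \leq N \leq \|\cdot\|_1$, and a characterisation of when $N = \|\cdot\|_1$ (namely $N(1,1) = 2$, and equivalently $N^*(1,1) = 1$, i.e.\ $N^* = \|\cdot\|_\infty$).

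For part (a), the implication that $X$ or $Y$ being DOH forces $X \oplus_1 Y$ to be DOH is the constructive direction: given a finite $E \subseteq S_{X \oplus_1 Y}$, I would (say $X$ is DOH) extract the $X$-components, normalise those that are nonzero, apply DOH of $X$ to obtain a witness $y_0 \in S_X$, and lift it to $(y_0, 0) \in S_{X \oplus_1 Y}$. The key computation is that for the $\ell_1$ sum $\|(a_i x_i + z_i)\|_1 = \|a_i x_i^X + z_i^X\|_X + \|a_i x_i^Y + z_i^Y\|_Y$, so the $X$-parts alone already produce the required lower bound once one observes that any decomposition $\sum z_i = (y_0, 0)$ restricts to a decomposition $\sum z_i^X = y_0$ in $X$; the extra $Y$-terms are nonnegative and only help. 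The reverse implication (DOH of the sum forces DOH of a summand) is the one I expect to be the main obstacle: a priori the witness $y \in S_{X \oplus_1 Y}$ has both an $X$- and a $Y$-component, and DOH of the sum gives an inequality mixing both coordinates, so I must argue that the mass of the witness concentrates in one factor (up to $\varepsilon$), or else derive a contradiction with octahedrality of the other factor. The natural strategy is to test the sum's DOH inequality against families $E$ supported entirely in $X$ (i.e.\ of the form $(x,0)$) and separately in $Y$, and show that if \emph{neither} $X$ nor $Y$ is DOH one can choose finite sets in each whose combined failure contradicts the single inequality for the sum; pinning down the bookkeeping between the two coordinate decompositions is where the care is needed.

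For part (b), I would prove the contrapositive form directly: assuming $N \neq \|\cdot\|_1$, I show the DOH inequality must fail. The idea is that $N \neq \|\cdot\|_1$ means $N(1,1) < 2$, equivalently the dual ball contains no functional achieving value $1$ simultaneously against a norming pair from each factor; quantitatively there is a constant $c = N(1,1) < 2$ such that $\|(x,y)\|_N \leq \tfrac{c}{2}(\|x\|_X + \|y\|_Y)$ fails to reach the $\ell_1$ bound. I would then exhibit a small test family: take $n = 1$, a single unit vector $x_1 = (e, 0)$ with $e \in S_X$, and split the would-be witness $y = (p, q)$ across the two summands via a decomposition that puts $y_1$ entirely in one factor and a second piece in the other. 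Choosing $a_1, b_1$ and the decomposition so that the triangle-inequality slack forced by $N(1,1) < 2$ accumulates, the sum $\|a_1 x_1 + y_1\| + \|b_1 x_1 - y_1\|$ will fall strictly below $(1 - \varepsilon)(a_1 + b_1 + 2)$ for small $\varepsilon$, since the subadditivity defect of $N$ is bounded away from $0$. The main subtlety here is selecting the decomposition $\sum y_i = y$ adversarially — DOH quantifies over \emph{all} such decompositions, so to defeat it I need only produce one bad decomposition, and the freedom to route the witness's $X$- and $Y$-components into different summands of the sum is exactly what the strictly convex combination $N \neq \|\cdot\|_1$ penalises.
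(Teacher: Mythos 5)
Your strategy for part (a) matches the paper's, but there is an imprecision in the forward direction and the concrete construction you propose for part (b) would fail. In (a), after normalising the $X$-components $x_i$ of the elements of $E$ and applying DOH of $X$, the lower bound delivered by the $X$-coordinates is $(1-\varepsilon)\bigl(\sum_i(a_i+b_i)\|x_i\|+2\bigr)$, not $(1-\varepsilon)\bigl(\sum_i(a_i+b_i)+2\bigr)$; since $\|x_i\|+\|y_i\|=1$ for $(x_i,y_i)\in E$, the $Y$-coordinates must supply the missing $\sum_i(a_i+b_i)\|y_i\|$, which they do via the triangle inequality $\|a_iy_i+w_i\|+\|b_iy_i-w_i\|\geq(a_i+b_i)\|y_i\|$. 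So the $Y$-terms are not merely ``nonnegative and only help''; they are needed quantitatively. The reverse direction of (a) is only a strategy statement, but it is the paper's strategy (take witnessing sets $E_1\subseteq S_X$, $E_2\subseteq S_Y$ for the failure of DOH, set $E=\{(x,0),(0,y)\colon x\in E_1,\,y\in E_2\}$, and defeat an arbitrary witness $(\overline{z},\overline{w})$ by rescaling each coordinate to the unit sphere of its factor and combining the two bad decompositions).

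The genuine gap is in (b). Taking $n=1$ and the singleton family $E=\{(e,0)\}$ cannot work: with $n=1$ the only admissible decomposition is $y_1=y$, so there is nothing to ``route''; and even with $n=2$, a witness of the form $(p,0)$ with $p\in S_X$ keeps every term of the DOH sum inside a monotone perturbation of the $X$-coordinate, where no subadditivity defect of $N$ is activated, so $\{(e,0)\}$ does not defeat it. You need a test family containing a unit vector from \emph{each} factor, $E\supseteq\{(x,0),(0,w)\}$, together with the cross-routed decomposition $y=(0,y_2)+(y_1,0)$ that pairs the $Y$-component $(0,y_2)$ with $(x,0)$ and the $X$-component $(y_1,0)$ with $(0,w)$, using coefficients $a=\|y_1\|$ and $b=\|y_2\|$ in each pair. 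The four norms then evaluate to $N(\|y_1\|,\|y_2\|)=1$, $\|y_2\|N(1,1)$, $\|y_1\|N(1,1)$, and $N(\|y_1\|,\|y_2\|)=1$, giving a total of $2+(\|y_1\|+\|y_2\|)N(1,1)$, which falls below $(1-\varepsilon)\bigl(2\|y_1\|+2\|y_2\|+2\bigr)$ because $N(1,1)<2(1-2\varepsilon)$ and $\|y_1\|+\|y_2\|\geq N(\|y_1\|,\|y_2\|)=1$. Finally, your auxiliary inequality $\|(x,y)\|_N\leq\frac{N(1,1)}{2}(\|x\|_X+\|y\|_Y)$ is false in general (take $N=\|\cdot\|_\infty$ and $y=0$); the only fact the argument needs is the homogeneity $N(t,t)=tN(1,1)$.
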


\begin{proof}
(a). First, assume that $X$ is DOH. Let $E$ be a finite subset of $S_{X\oplus_1 Y}$ and $\varepsilon>0$. Since $X$ is DOH, there exists a $z\in S_X$ such that, for any $z_1,\ldots, z_n\in X$ with $\sum_{i=1}^n z_i=z$, and, for any $a_1,b_1,\ldots, a_n, b_n\geq 0$, $(x_1, y_1),\ldots, (x_n, y_n)\in E$, we have
\begin{align*}
\sum_{i=1}^n \bigr(\|a_i x_i +z_i\|+\|b_i x_i -z_i\|\bigr)\geq (1-\varepsilon)\Big(\sum_{i=1}^n (a_i+b_i)\|x_i\|+2\Big).
\end{align*}
Notice that $N(z,0)=1$. Let $(z_1,w_1),\ldots, (z_n, w_n)\in X\oplus_1 Y$ be such that $\sum_{i=1}^n (z_i, w_i)=(z,0)$. Then, for any $a_1,b_1,\ldots,a_n, b_n\geq 0$,\linebreak $(x_1, y_1),\ldots,(x_n, y_n)\in E$, we have
\begin{align*}
&\sum_{i=1}^n \bigr(\|a_i (x_i, y_i)+(z_i, w_i)\|+\|b_i(x_i, y_i)-(z_i, w_i)\|\bigr)\\
&\qquad = \sum_{i=1}^n \bigr(\|a_i x_i +z_i\|+\|a_i y_i+w_i\|+\|b_i x_i-z_i\|+\|b_i y_i -w_i\|\bigr)\\
&\qquad \geq (1-\varepsilon)\Big(\sum_{i=1}^n (a_i+b_i)\|x_i\|+2\Big)+\sum_{i=1}^n (a_i+b_i) \|y_i\|\\
&\qquad \geq (1-\varepsilon)\Big(\sum_{i=1}^n (a_i+b_i)+2\Big).
\end{align*} 
Therefore, $X\oplus_1 Y$ is DOH.

Assume now that $X$, $Y$ are not DOH. Then there exist finite subsets $E_1$, $E_2$ of $S_X$ and $S_Y$, respectively, and $\varepsilon>0$, such that, for any $z\in S_X$, $w\in S_Y$, there exist $n\in \mathbb{N}$ and $z_i\in X$, $w_i\in Y$, $a_i,b_i,c_i, d_i\geq 0$, $x_i\in E_1$, $y_i\in E_2$, $i=1,\ldots,n$, such that $\sum_{i=1}^n z_i=z$, $\sum_{i=1}^n w_i=w$,
\begin{align*}
\sum_{i=1}^n \bigr(\|a_i x_i +z_i\|+\|b_i x_i -z_i\|\bigr)< (1-\varepsilon)\Big(\sum_{i=1}^n (a_i+b_i)+2\Big),
\end{align*}
and
\begin{align*}
\sum_{i=1}^n \bigr(\|c_i y_i +w_i\|+\|d_i y_i -w_i\|\bigr)< (1-\varepsilon)\Big(\sum_{i=1}^n (c_i+d_i)+2\Big).
\end{align*}

Now, take $E=\{(x,0), (0,y)\colon x\in E_1, y\in E_2\}$. This is a finite subset of $S_{X\oplus_1 Y}$. However, for any $(\overline{z}, \overline{w})\in S_{X\oplus_1 Y}$, there exist $n\in \mathbb{N}$ and $\overline{z}_i\in X$, $\overline{w}_i\in Y$, $a_i,b_i,c_i,d_i\geq 0$, $(x_i, 0), (0,y_i)\in E$, $i=1,\ldots, n$, such that $\sum_{i=1}^n \overline{z}_i=\overline{z}$, $\sum_{i=1}^n \overline{w}_i=\overline{w}$, and
\begin{align*}
&\sum_{i=1}^n \bigr(\bigr\|\|\overline{z}\|a_i (x_i,0) +(\overline{z}_i,0)\bigr\|+\bigr\|\|\overline{z}\|b_i (x_i,0) -(\overline{z}_i,0)\bigr\|\bigr)\\
&\qquad\qquad+\sum_{i=1}^n \bigr(\bigr\|\|\overline{w}\|c_i (0,y_i) +(0,\overline{w}_i)\bigr\|+\bigr\|\|\overline{w}\|d_i (0,y_i) -(0,\overline{w}_i)\bigr\|\bigr)\\
&\qquad=\sum_{i=1}^n \bigr(\bigr\|\|\overline{z}\|a_i x_i +\overline{z}_i\bigr\|+\bigr\|\|\overline{z}\|b_i x_i -\overline{z}_i\bigr\|\bigr)\\
&\qquad\qquad+\sum_{i=1}^n \bigr(\bigr\|\|\overline{w}\|c_i y_i +\overline{w}_i\bigr\|+\bigr\|\|\overline{w}\|d_i y_i -\overline{w}_i\bigr\|\bigr)\\
&\qquad< (1-\varepsilon) \bigr(\|\overline{z}\|(a_i+b_i)+ \|\overline{w}\|(c_i+d_i)+2\bigr).
\end{align*}
Therefore, $X\oplus_1 Y$ is not DOH.

(b). Take $x\in S_X$ and $w\in S_Y$. Then $(x,0),(0,w)\in S_{X\oplus_N Y}$. Notice that, for any $y\in S_{X\oplus_N Y}$, there exist $y_1\in B_X$, $y_2\in B_Y$ such that $y=(y_1, 0)+(0,y_2)$. Since $N$ is not the $\ell_1$ norm, there exists an $\varepsilon>0$ such that $N(1,1)<2(1-2\varepsilon)$. Thus,
\begin{align*}
&\bigr\|\|y_1\|(x,0)+(0,y_2)\bigr\|+\bigr\|\|y_2\|(x,0)-(0, y_2)\bigr\|\\
&\qquad\qquad+\bigr\|\|y_1\|(0,w)+(y_1,0)\bigr\|+\bigr\|\|y_2\|(0,w)-(y_1,0)\bigr\|\\
&\qquad=N(\|y_1\|,\|y_2\|)+N(\|y_2\|,\|y_2\|)+N(\|y_1\|,\|y_1\|)+N(\|y_2\|,\|y_1\|)\\
&\qquad=2+\|y_2\|N(1,1)+\|y_1\|N(1,1)\\
&\qquad<2+2(1-2\varepsilon)\bigr(\|y_1\|+\|y_2\|\bigr)\\
&\qquad\leq (1-\varepsilon)\bigr(2\|y_1\|+2\|y_2\|+2\bigr).
\end{align*}
Therefore, $X\oplus_N Y$ is not DOH.
\end{proof}

Note that, by Proposition \ref{prop: DOH direct sums}, the space $\ell_1\oplus_\infty\ell_1$, which is known to be OH (see \cite{HLP}), is not 
DOH.

We finish the paper by noting that the space $C[0,1]$, which is OH (see \cite[Example 1.1]{HLP}), is not DOH. To see this, define $f_1, f_2\in S_{C[0,1]}$,
\[
f_1(x)=\left\{\begin{array}{ll}
1-4x, &\text{if $x\in \big[0,\frac{1}{4}\big]$,}\\
0, & \text{else;}
\end{array}\right.
\]
and
\[
f_2(x)=\left\{\begin{array}{ll}
4x-3, &\text{if $x\in \big[\frac{3}{4}, 1\big]$,}\\
0, &\text{else.}
\end{array}\right.
\]
Let $0<\varepsilon<\frac{1}{3}$. For any $g\in S_{C[0,1]}$, we can find $g_1,g_2\in B_{C[0,1]}$ such that $g=g_1+g_2$,
\[
g_1(x)=0\qquad \text{for all $x\in \Big[0, \frac{1}{4}\Big]$,}
\]
and
\[
g_2(x)=0\qquad \text{for all $x\in \Big[\frac{3}{4},1\Big]$.}
\] 
Then
\begin{align*}
\|f_1+g_1\|+\|f_1-g_1\|+\|f_2+g_2\|+\|f_2-g_2\|=4<(4+2)(1-\varepsilon).
\end{align*}
Therefore, $C[0,1]$ is not DOH.

\section*{Acknowledgements}
The paper is a part of a Ph.D. thesis which is being prepared
by the author at University of Tartu under the supervision of Rainis Haller and Märt Põldvere.
The author is grateful to his supervisors for their valuable help. This work was supported by the Estonian Research Council grant (PRG877).

\addcontentsline{toc}{section}{References}

\end{document}